\documentclass[psamsfonts]{amsart}

\usepackage{amssymb,amsfonts}
\usepackage[all,arc]{xy}
\usepackage{enumerate}
\usepackage{mathrsfs}
\usepackage{graphicx}
\usepackage{color}
\usepackage{float}
\usepackage{enumitem}
\usepackage{cite}
\setitemize[0]{leftmargin=5pt,itemindent=5pt}
\bibliographystyle{acm}

\newtheorem{thm}{Theorem}[section]

\newtheorem{prop}[thm]{Proposition}
\newtheorem{lem}[thm]{Lemma}

\theoremstyle{definition}
\newtheorem{defn}[thm]{Definition}

\theoremstyle{remark}
\newtheorem{rem}[thm]{Remark}

\newcommand{\tab}{\hspace*{2em}}

\makeatletter
\let\c@equation\c@thm
\makeatother
\numberwithin{equation}{section}

\bibliographystyle{plain}

\title{A Bound on Length Isospectral Families of Hyperbolic Surfaces}
\author{Weston Ungemach}
\date{\today}

\begin{document}

\begin{abstract}
In this paper we obtain a bound on the number of isometry classes of finite area hyperbolic surfaces length isospectral to a given surface depending only on
the topological type of the surface and the length of the shortest closed geodesic on the surface. This will follow from a more general bound applying to any family of hyperbolic surfaces which admits a Bers' constant and with a lower bound on systole length.
\end{abstract}

\maketitle

\tableofcontents

\section{Introduction}

	Recall that the length spectrum of a compact hyperbolic surface is the ordered collection of lengths of all closed geodesics on the surface, counted with multiplicity, and that two hyperbolic surfaces are length isospectral if they share the same length spectrum. Of particular interest are those surfaces length isospectral to a given surface, but not isometric to it. 
	\begin{defn}
		A length isospectral family is a collection of hyperbolic surfaces which are pair-wise isospectral and non-isometric.
	\end{defn}
	In this paper, all hyperbolic surfaces will be assumed complete and without boundary. It has been shown that for fixed genus $g$ the collection of compact hyperbolic surfaces whose isometry type is determined by their length spectrum forms a generic set in the moduli space of such surfaces \cite{Wolpert1, Wolpert2}. There is, however, a non-empty, local real analytic subvariety of this space for which the spectrum does not determine isometry-type \cite{Wolpert1, Vigneras}. In this case, the collection of all isometry classes of compact hyperbolic surfaces length isospectral to a given surface $S$ has been shown to be finite \cite{McKean} and Buser \cite[Thm 13.1.1]{Buser 2} has produced the following upper bound:
	\begin{thm}
		Let $S$ be a compact hyperbolic surface of genus $g \geq 2$. Then there exist at most $\exp(720g^2)$ pairwise non-isometric compact hyperbolic surfaces length isospectral to $S$.
	\end{thm}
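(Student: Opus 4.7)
The plan is to implement Buser's strategy via Fenchel--Nielsen coordinates relative to a Bers pants decomposition: parametrize each isospectral surface $S'$ by a finite tuple of data (combinatorial gluing pattern, cuff lengths, twist parameters), and show that each component of this data is pinned down to only finitely many possibilities by the common spectrum $L(S)$.

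First, I would invoke Bers' theorem to produce, for every $S'$ isospectral to $S$, a pants decomposition $\mathcal{P}(S')$ by $3g - 3$ simple closed geodesics of length at most the Bers constant $B_g$, which may be taken linear in $g$. The combinatorial gluing type of $\mathcal{P}(S')$ is one of at most $f(g)$ trivalent graphs on $2g - 2$ vertices. Each of the $3g-3$ cuff lengths lies in $L(S) \cap [0, B_g]$, and since $B_g$ depends only on $g$, the standard count of closed geodesics on a genus $g$ hyperbolic surface (for instance Buser's bound of the form $(g-1)e^{L+6}$) gives a uniform upper bound $N(g)$ on the number of such spectrum values. So the cuff lengths can be chosen in at most $N(g)^{3g-3}$ ways.

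Next, with combinatorial type and cuff lengths fixed, $S'$ is determined by the twist parameters $(\tau_1, \ldots, \tau_{3g-3})$, taken modulo the Dehn twists $\tau_i \mapsto \tau_i + \ell_i$, which act as isometries of the unmarked surface. To pin down each $\tau_i$, I would choose an auxiliary simple closed curve $\gamma_i$ crossing only the cuff $c_i$: standard hyperbolic trigonometry shows that $\ell_{S'}(\gamma_i)$ is a strictly convex function of $\tau_i$ with a unique minimum on a fundamental domain, so each prescribed value in the spectrum admits at most two preimages. Combined with an a priori upper bound on $\ell_{S'}(\gamma_i)$ in terms of $B_g$ (obtained from the collar lemma and the right-angled hexagon identities), this yields $O(N(g))^{3g-3}$ consistent twist vectors.

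Multiplying the three contributions produces a total of the form $f(g) \cdot N(g)^{O(g)}$, which with the explicit estimates $B_g = O(g)$ and $N(g) \leq e^{O(g)}$ collapses to $\exp(O(g^2))$; careful numerical bookkeeping using the sharpest available constants in Bers' theorem and in the geodesic counting estimate produces the concrete bound $\exp(720 g^2)$. The principal obstacle is the twist step: the auxiliary curves $\gamma_i$ must be chosen uniformly so that their length functions are convex in the twists with controlled minima and so that their lengths are bounded a priori in terms of $B_g$. Executing this requires delicate collar and hexagon estimates, and the resulting numerical constants are precisely what ultimately determines the value $720$ in the final exponent.
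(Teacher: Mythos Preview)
Your outline is essentially the same strategy the paper uses for its main Theorem~1.4 (adapted from Buser, to whom Theorem~1.2 is attributed and cited rather than re-proved): count trivalent graphs, then cuff lengths drawn from the spectrum below the Bers bound, then twists recovered two-to-one from transversal lengths via convexity. The tools you name --- Bers' theorem, the geodesic count of order $(g-1)e^{L+6}$, convexity of the transversal length in the twist, right-angled hexagon trigonometry --- all appear in the paper in exactly the same roles (Propositions~2.4, 3.3, 4.3 and~5.3).

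There is, however, a real gap in your twist step. You assert that $\ell_{S'}(\gamma_i)$ admits an a priori bound ``in terms of $B_g$'' from collar and hexagon identities, but the paper's Proposition~5.3 carries out precisely that computation and obtains $\ell(\delta_j)\le 3B - 4\log I + 12\log 2$, where $I$ is the systole length: a short cuff forces a long collar, which the transversal must cross, so the transversal length is \emph{not} controlled by $B_g$ alone. This is exactly why the paper's own bounds carry a factor of $I^{-4(3g-3+n)}$, and why the closing Remark in Section~5 notes that Buser's unconditional $\exp(720g^2)$ requires a separate treatment of the cuffs shorter than $2\operatorname{arcsinh}(1)$ --- additional work your sketch does not indicate. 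You correctly flag the twist step as the principal obstacle, but as written your argument would deliver the paper's systole-dependent bound rather than Buser's uniform one.
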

	As the largest length isospectral families constructed to date are on the order of $e^{\log(g)^2}$ \cite{Brooks}, it would seem that the asymptotics on Buser's bound are significantly too large. Our main results are as follows:
	\begin{thm}
			Allow $L_I(g,n)$ to denote the largest size of any length isospectral family in the set of finite area hyperbolic surfaces with genus $g$ and $n$ cusps and systole length at least $I$. Then the following inequalities hold, where $g,n$ are non-zero unless indicated:
		\begin{itemize}
			\item $L_I(g,n) < I^{-4(3g-3+n)}e^{380g^2 + 46n^2 + 257ng}$.\vspace*{1mm}
			\item $L_I(g,0) < I^{12(1-g)}e^{171g^2}.$ \vspace*{1mm}
			\item If $n \geq 3$, then $L_I(0,n) < I^{12-4n}e^{300(n-3)\sqrt{n-2} + 2(n-3)\log(n+3) + n\log(n-1) + 24n}$
			\item $L_I(2,0) < I^{-12} \cdot 3.8 \times 10^{53}$. \vspace*{1mm}
		\end{itemize}
	\end{thm}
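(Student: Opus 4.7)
The plan is to prove a uniform bound: for any family $\mathcal{F}$ of finite-area hyperbolic surfaces of topological type $(g,n)$ that admits a Bers constant $B$ and has systole at least $I$, every length isospectral sub-family of $\mathcal{F}$ has size at most $F(B, I, g, n)$ for an explicit function $F$. The four stated inequalities then follow by inserting the sharpest known Bers constants into this general bound in each topological regime. For any $S \in \mathcal{F}$, Bers' theorem supplies a pants decomposition $\mathcal{P}_S = (\gamma_1, \ldots, \gamma_{3g-3+n})$ with every cuff length in $[I, B]$, and the isometry class of $S$ is determined by the combinatorial type of $\mathcal{P}_S$ together with its Fenchel--Nielsen length and twist vectors $(\ell_i, \tau_i)$.

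I would then control each piece of data in turn. Fix a reference surface $S_0$ in the isospectral family. First, each $\ell_i$ must lie in the length spectrum of $S_0$ and be at most $B$, so the length vector is an ordered multiset of size $3g-3+n$ drawn from the finite set of geodesic lengths of $S_0$ bounded by $B$; its cardinality is controlled by a Huber--Margulis style estimate depending only on $B$, $g$, $n$. Second, for each cuff $\gamma_i$ I would introduce dual curves $\delta_i^{\pm}$ supported in the two pants adjoining $\gamma_i$; their lengths are smooth strictly monotone functions of $\tau_i$ modulo $\ell_i$ with derivative bounded below in terms of $B$. Since each dual length must itself appear in the spectrum of $S_0$, this cuts the fiber $[0, \ell_i)$ into at most $C(B)/\ell_i \le C(B)/I$ admissible twist values per cuff; using several independent dual curves per cuff upgrades the product to the exponent $I^{-4(3g-3+n)}$ appearing in the theorem.

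Third, the count of combinatorial types of $\mathcal{P}$ is controlled by the number of cubic multigraphs on $2g-2+n$ vertices with $n$ distinguished half-edges, multiplied by the number of markings on such a graph; this is an $\exp(O(g\log g + n\log n))$ factor absorbed into the polynomial exponent $380g^2 + 46n^2 + 257ng$. Multiplying the three counts gives the general inequality, after which specialization is mechanical: Buser's linear bound $B(g) \leq 21(g-1)$ plugged into the compact case produces $e^{171g^2}$; the sharper Balacheff--Parlier--Sabourau Bers constant of order $\sqrt{n}$ for punctured spheres yields the $e^{300(n-3)\sqrt{n-2}}$ dependence when $g=0$; and an explicit numerical Bers constant in genus two produces the $3.8 \times 10^{53}$ figure.

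The main obstacle I expect is the twist-counting step in the presence of cusps: pants with cusp boundaries have degenerate geometry near the punctures, so the Lipschitz estimates for the dual-length functions $\tau_i \mapsto \ell_S(\delta_i^{\pm})$ require considerably more care than in the compact setting, and it is here that the mixed term $257 ng$ and the coefficient $46n^2$ in the general bound arise. Keeping these estimates linear (rather than exponential) in $B$ is essential, since otherwise substituting the sharpest $B(g,n)$ would blow the final exponent beyond the announced polynomial in $g$ and $n$.
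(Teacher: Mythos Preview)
Your overall architecture---prove a master bound $L_{B,I}(g,n)$ by counting (graphs) $\times$ (length vectors) $\times$ (twist vectors) and then specialize $B$---is exactly the paper's strategy, and your treatment of the graph count and the length-parameter count is essentially the same as the paper's. The divergence, and the gap, is in the twist-counting step.

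You propose to control twists by a Lipschitz/monotonicity argument: the dual length $\tau_i\mapsto \ell(\delta_i)$ is ``strictly monotone with derivative bounded below in terms of $B$,'' and this somehow yields at most $C(B)/\ell_i\le C(B)/I$ admissible twist values, with ``several independent dual curves'' then upgrading the exponent to $I^{-4(3g-3+n)}$. This is not how the $I$-dependence arises, and the argument as written does not close. First, $\ell(\delta_i)$ is not monotone in $\tau_i$ but convex (this is what the paper actually uses, citing Bestvina--Bromberg--Fujiwara--Souto), so each value is attained at most twice; a derivative lower bound plays no role. Second, and more importantly, the factor $I^{-4}$ per cuff does not come from dividing the twist interval by anything: it comes from an \emph{upper bound on the length of the transversal}. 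Using pentagon trigonometry on the two pairs of pants adjacent to $\gamma_i$ (and the trirectangle variant when a pant has a cusp) one gets
\[
\ell(\delta_i)\;\le\;3B-4\log I+12\log 2,
\]
the $-4\log I$ term reflecting the collar growth as $\ell(\gamma_i)\to 0$. Feeding this into the length-spectrum growth rate $\tfrac12(2g-2+n)e^{L+6}$ gives at most $(2g-2+n)e^{3B+12\log 2+6}\cdot I^{-4}$ possible transversal lengths, hence (after the factor $2$ for convexity) that many twists per cuff. Raising to the power $3g-3+n$ is what produces $I^{-4(3g-3+n)}$; no ``several independent dual curves'' are needed or used.

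Two smaller points. Your closing remark that the estimates must be kept ``linear rather than exponential in $B$'' is backwards: the dependence on $B$ in the master bound is genuinely $e^{4B(3g-3+n)}$, and the polynomial exponents $380g^2$, $171g^2$, etc.\ arise precisely because $B$ is itself linear in $3g-3+n$ (or sublinear, for punctured spheres). Relatedly, the mixed coefficients $257ng$ and $46n^2$ do not come from cusp-specific subtleties in the twist estimate; they come from expanding $4B(3g-3+n)$ with $B=10.13(3g-3+n)$. The cusp case of the transversal bound is handled by the same pentagon/trirectangle computation with one vertex ideal and yields the identical inequality.
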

	
	In fact, we prove a more general result. Namely,
	
	\begin{thm}
		Let $X$ be a collection of hyperbolic surfaces with genus $g$ and $n$ cusps, Bers' constant $B$, and systole of length at least $I$. Then the maximum size of a length isospectral family in $X$ is bounded above by:	
		\begin{align*}
			L_{B,I}(g,n) &= 2\exp\{2(3g-3 + n)(2B - 2\log(I) + \log(2g-2+n) +\\
			& \hspace*{50mm} 5\log(2) + 6) + \log(C(g,n))\}.
		\end{align*}
		where 
		\begin{equation*}
			C(g,n) = \begin{cases}
				ng^{3g}(3g-3+3n)^n, & \text{if } n \not= 0 \\
				g^{3g}, & \text{if } n = 0
			\end{cases}
		\end{equation*}
	\end{thm}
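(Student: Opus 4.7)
The plan is to injectively associate to each isometry class in a length isospectral family $\mathcal{F} \subseteq X$ a combinatorial datum together with a discretized tuple of Fenchel--Nielsen coordinates, and then to bound the number of such labels.

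For each $S \in \mathcal{F}$, use the hypothesis that $X$ admits the Bers' constant $B$ to select a pants decomposition $P(S)$ with all $k = 3g-3+n$ cuff lengths in $[I,B]$. Because $\mathcal{F}$ is isospectral, every cuff length for every $S \in \mathcal{F}$ lies in the common length spectrum intersected with $(0,B]$. After fixing a combinatorial type of $P(S)$ --- a purely topological choice that the factor $C(g,n)$ is built to enumerate, with the case distinction $n=0$ versus $n\neq 0$ accounting for how cusps must be slotted into pants --- the surface $S$ is then determined by Fenchel--Nielsen coordinates $(\ell_i, \tau_i)_{i=1}^{k}$ with $\ell_i \in [I,B]$ and $\tau_i \in [0,\ell_i) \subseteq [0,B)$.

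The crux, and the principal obstacle, is a quantitative local rigidity estimate: if two surfaces $S, S' \in \mathcal{F}$ share a combinatorial pants decomposition and their Fenchel--Nielsen coordinates lie within $\delta$ of each other, then $S \cong S'$. This reduces to a uniform Lipschitz bound for the length spectrum along coordinates in a $\delta$-cell, since the length of every closed geodesic is real-analytic in the Fenchel--Nielsen coordinates; Wolpert's formula together with the collar lemma controls the twist derivatives in terms of the cuff lengths, hence in terms of the systole $I$. For $\delta$ of order $I^{2}e^{-2B}$ (up to combinatorial constants), the spectrum function restricted to a $\delta$-cell becomes injective modulo isometry, so each cell contains at most one element of $\mathcal{F}$.

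Once the rigidity scale $\delta$ is in hand, the bound follows by counting: at most $C(g,n)$ combinatorial types, and for each type the coordinate range $[I,B]^{k}\times[0,B)^{k}$ admits at most $(B/\delta)^{2k}$ discretization cells. Matching this product with the stated exponent $2k(2B - 2\log I + \log(2g-2+n) + 5\log 2 + 6)$ fixes the precise value of $\delta$, with the additive terms $\log(2g-2+n)$, $5\log 2$, and $6$ absorbing explicit constants coming from the collar lemma, the Bers' theorem normalization, and the enumeration of combinatorial types; the prefactor $2$ accommodates a chirality/orientation choice in the decomposition. The main work of the proof will therefore be to establish the rigidity estimate in the previous paragraph with constants sharp enough to produce exactly the inequality as stated.
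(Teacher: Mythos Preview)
Your approach diverges from the paper's, and the central step---the ``quantitative local rigidity estimate''---is both unproven and does not follow from the tools you invoke. You claim that if two isospectral surfaces have Fenchel--Nielsen coordinates within $\delta$ then they are isometric, and that this ``reduces to a uniform Lipschitz bound for the length spectrum.'' But a Lipschitz bound says only that nearby coordinates yield nearby lengths; it says nothing about why two \emph{distinct} coordinate points with the \emph{same} spectrum must coincide. What you would actually need is a lower Lipschitz (injectivity) estimate for the spectrum map on each $\delta$-cell, and neither Wolpert's twist derivative formula nor the collar lemma supplies that. You are effectively assuming the hard part of the theorem. Moreover, your cell count $(B/\delta)^{2k}$ with $\delta\sim I^2 e^{-2B}$ produces an exponent like $2k(2B - 2\log I + \log B)$; the term $\log(2g-2+n)$ in the statement has no natural source in a pure discretization argument, which is a sign that the mechanism is wrong, not just the constants.

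The paper's argument avoids discretizing moduli space altogether. Fixing a reference surface $S$, any isospectral $S'$ has its cuff lengths lying in the length spectrum of $S$ truncated at $B$; Proposition~4.3 bounds the number of such lengths by $\tfrac{1}{2}(2g-2+n)e^{B+6}$, and this is where $\log(2g-2+n)$ and the $+6$ enter. The twist parameters are handled not by discretization but by Proposition~3.3: the length of the transversal $\delta_j$ determines $\alpha_j$ up to two values, and Proposition~5.3 bounds $l(\delta_j)\le 3B - 4\log I + 12\log 2$, so the transversal lengths again come from a finite segment of the spectrum of $S$. Multiplying the three counts (graphs, length parameters, twist parameters) gives the stated bound exactly. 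The finiteness thus comes from the discreteness of the length spectrum of a single reference surface, not from any rigidity of the Fenchel--Nielsen map.
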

For convenience, we recall two definitions used in the hypothesis of the previous theorem:
	\begin{defn} \textcolor{white}{stuff.}
		\begin{enumerate}
			\item A family $X$ of hyperbolic surfaces is said to have Bers' constant $B$ if every $S \in X$ admits a pants decomposition $\gamma_1,...,\gamma_m$ such that $l(\gamma_i) \leq B$ for all $i$.
			\item A systole on a hyperbolic surface $S$ is a minimal length, nontrivial closed geodesic on the surface. The systole length of $S$ is the length of any systole of $S$.
		\end{enumerate}
	\end{defn}
	Note that every collection of genus $g$ hyperbolic surfaces with $n$ punctures admits a Bers' constant of $10.13(3g-3+n)$ - which is shown in \cite{Buser2} and reproduced in Proposition 3.1 - so that $B$ can always be taken less than or equal to this bound, depending upon the surfaces in question. As the length spectrum of a surface determines both the genus \cite[Thm 9.2.14]{Buser2} and the number of cusps \cite{Borthwick}, the Bers' constant for surfaces with genus $g$ and $n$ punctures yields a bound on the number of surfaces which could possibly be length isospectral to a surface with that genus and number of cusps - not just those in the collection $X$. Here, the dependence on systole length is given by a multiplicative factor of $I^{-4(3g-3+n)}$ and the dependence on the Bers' constant is a multiplicative factor of $\exp\{4B(3g-3+n)\}$. \\ \tab
	To achieve the numerical values in Theorem 1.3 from the general result Theorem 1.4, we have used the following Bers' constants:
	
		\begin{center}
			\begin{tabular}{ | l | l | p{5cm} |}
			\hline
			Class of Surfaces                 & Bers' Constant        & Citation \\ \hline
			finite area Surfaces              & $10.13(3g-3+n)$       & Proposition 3.1 \\ \hline
			Closed Surfaces                   & $12.67(g-1) + 20$     & \cite[Parlier]{Parlier1}  \\ \hline
			Punctured Spheres                 & $30\sqrt{2\pi(n-2)}$  & \cite[Florent, Parlier]{Parlier2} \\ \hline
			Genus 2 Surfaces                  & 4.45                  & \cite[Gendulphe]{Gendulphe} \\ \hline
			\end{tabular}
		\end{center}
		
	The proof of Theorem 1.4 will proceed by using the structure of Teichmuller space to say that every surface we consider will be isometric to one determined by a set of parameters - a graph encoding a pants decomposition and the length and twist parameters - and then counting these parameters using our assumed bound on systole length and existence of a Bers' constant.\\ \tab
	Allow $S$ to be a fixed surface with $S'$ isospectral to $S$. The number of possible graphs encoding a corresponding pants decomposition will be calculated in Proposition 2.4. In order to bound the number of possible length parameters in $S'$, we will use the assumed Bers' constant to get an upper bound on the possible lengths of the curves defining them, count the number of curves on $S$ of length less than this bound, and then by isospectrality conclude that the length parameters of $S'$ must come from this same finite collection. Here, a key role is played by Proposition 4.3. To bound the number of possible twist parameters on $S'$, we first show in Proposition 3.3 that the twist parameters are essentially determined by the lengths of a particular collection of closed curves on the surface - the transversals to the curves in our pants decomposition; we then calculate a bound on the lengths of these curves and proceed as in the case of the length parameters.\\ \tab
	This final bounding of the twist parameters is where our assumption on systole length will prove essential. It is a consequence of the collar theorem that as a simple closed geodesic shrinks, it has an embedded cylinder of increasing length about it on the surface. This results in the lengthening of the transverse curve to the geodesic. Thus, allowing for arbitrarily short geodesics results in arbitrarily long transversals and the proof presented here is no longer valid.
	
\section{Background and Notation: Pants Decompositions and their Combinatorics}

	We first remark that all material discussed in this section is either presented in or a natural generalization of the material presented in \cite{Buser2}.
		\begin{defn}
			We denote the moduli space of isometry classes of finite area hyperbolic surfaces of genus $g$ with $n$ cusps by $\mathcal{F}(g,n)$.
		\end{defn}
		\begin{defn}
			A pants decomposition on a surface $S \in \mathcal{F}(g,n)$ is a collection of simple closed curves $\gamma_1,...,\gamma_k$ on $S$ such that the complement of these geodesics is homeomorphic to a disjoint union of pairs of pants, i.e. thrice punctured spheres.
		\end{defn}
	It is a well known fact that every closed hyperbolic surface admits a decomposition into pairs of pants and that there are exactly $3g-3$ closed curves in the decomposition for a surface of genus $g$ \cite[Thm 3.6.4]{Buser2}. As there is a unique geodesic in the free homotopy class of each of these curves \cite[Thm 1.6.6]{Buser2} and geodesics minimize intersection \cite[Thm 1.6.7]{Buser2}, these curves may be taken to be geodesics. A choice of a pants decomposition can be combinatorially encoded by three sets of parameters: a 3-regular graph with $2g-2$ nodes, a $(3g-3)$-tuple of real numbers known as the length parameters, and a $(3g-3)$-tuple of real numbers between $-\frac{1}{2}$ and $\frac{1}{2}$ known as the twist parameters. In this section we discuss the 3-regular graphs and the length parameters and regulate a discussion of the twist parameters to the following section.
	\begin{figure}[H]
	\begin{center}
		\includegraphics[width=1.0\textwidth]{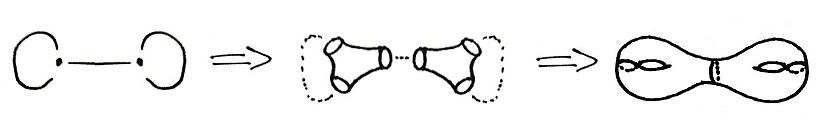} 
	\end{center}
	\caption{}
	\end{figure}
	A 3-regular graph is a graph in which every node has three emanating half-edges (a loop at a node counts as two emanating half-edges). Such a graph encodes a pants decomposition by replacing each node with a pair of pants where the boundary components open along the emanating half-edges from that node. Boundary components along the two half-edges of the same edge are then glued together creating a surface. In order that this glueing be sensible, however, we need the lengths of the boundary components along both halves of each edge to be the same. The choices of these lengths are encoded in the $(3g-3)$-tuple of length parameters. Similarly, any surface in $\mathcal{F}(g,n)$ admits a pants decomposition with $3g-3+n$ curves, and each of these curves has a corresponding geodesic in its free homotopy class. We then have a choice of length and twist parameter for each of these geodesics.
	\begin{figure}[H]
	\begin{center}
		\includegraphics[width=1.0\textwidth]{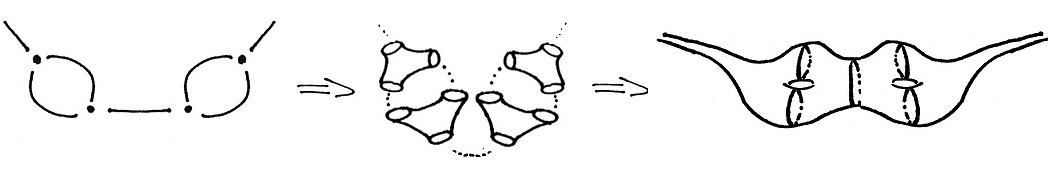} 
		\end{center}
	\caption{}
	\end{figure}
	The final piece of our generalization of pants decompositions to surfaces of finite area is finding a family of graphs which encode the combinatorics of such decompositions. Allow $S \in \mathcal{F}(g,n)$. For each pants decomposition of $S$, we get an associated graph by replacing each pair of pants with a node with three half-edges corresponding to the boundary components and cusps of that pair of pants. Half-edges are connected if they correspond to boundary components which were glued on the surface $S$. This graph will be 3-regular at all but at most $n$ irregular points. These irregular points will still have three emanating half edges, but these edges may not have free ends. An edge with a free end is called a free edge. Free edges correspond to cusps on $S$, as depicted in Figure 2.
	\begin{defn}
		A connected graph with free edges is called pseudo-3-regular if every node has three emanating half-edges, which may or may not have a free end.
	\end{defn}
	\begin{prop}
		If $n \not= 0$, then the number of pseudo-3-regular graphs which correspond to surfaces of genus $g$ with $n$ cusps is bounded above by $ng^{3g}(3g-3+3n)^n$.
	\end{prop}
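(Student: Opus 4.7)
The plan is to bound the count by a two-stage process: first fix a $3$-regular ``core'' graph on $2g-2$ vertices corresponding to a closed genus-$g$ surface, then count the ways of inserting $n$ cusps into this core to produce a pseudo-$3$-regular graph for $(g,n)$. This reflects the structure of the formula itself, with the factor $g^{3g}$ handling the core and $(3g-3+3n)^n$ handling the insertions.

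For the first stage, I would recall (or briefly re-derive) the standard estimate that the number of $3$-regular multigraphs on $2g-2$ labeled vertices is at most $g^{3g}$. This is obtained by enumerating perfect matchings of the $6g-6$ half-edges (three per vertex), dividing by the $(3!)^{2g-2}$ permutations of half-edges at each vertex, and applying Stirling's formula. For the second stage, I would describe an iterative insertion procedure in which the $i$-th cusp is placed into the graph obtained after the $i-1$ earlier insertions. At step $i$, the running graph has $3g-3+(i-1)$ regular edges and $i-1$ free edges, and a cusp is inserted by introducing a new node of degree $3$ with one free half-edge, attached by subdividing an existing edge or via a similar local move when the target node already carries cusps. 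Taking a uniform upper bound across all steps, I would bound the number of insertion locations by $3g-3+3n$ per cusp, yielding at most $(3g-3+3n)^n$ ordered insertion sequences.

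The remaining factor of $n$ comes from the ambiguity of ordering the cusps. A given pseudo-$3$-regular graph can arise from up to several different insertion sequences depending on which cusp is deemed the ``last'' one added; equivalently, running the procedure in reverse from a given graph, there are $n$ candidate cusps to strip off first. I would absorb this overcount into a single factor of $n$. Multiplying the three contributions $g^{3g}$, $(3g-3+3n)^n$, and $n$ produces the stated bound.

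The hardest part will be handling the degenerate configurations in which a single node carries two or three free edges, corresponding to pairs of pants with two or three cusps (the thrice-punctured sphere component being the extreme example). Such nodes cannot be obtained by subdividing a regular edge once per cusp, so either the insertion operation must be extended to cover these cases or they must be accounted for by an auxiliary estimate. Verifying that the per-step count of insertion locations remains bounded by $3g-3+3n$ even when these extensions are allowed—together with checking edge cases such as loops in the core graph, where subdivision produces multi-edges at a single vertex—will require a careful case analysis, and is where I expect the technical work of the proof to sit.
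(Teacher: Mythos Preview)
Your overall strategy---bound the $3$-regular core graphs by $g^{3g}$, then count ways to attach $n$ cusps with at most $3g-3+3n$ choices per attachment---matches the paper's. But your accounting for the factor $n$ is backwards. If every target graph arises from at least one ordered insertion sequence, then the number of graphs is already bounded above by the number of sequences, so $(3g-3+3n)^n$ is already an upper bound; the fact that a single graph may arise from several sequences only loosens this bound further, it never forces you to multiply by anything. As written your argument would actually yield the stronger bound $g^{3g}(3g-3+3n)^n$, which is fine for the proposition, but the reason you give for the extra $n$ is not a valid one.

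In the paper the factor $n$ has a different origin, and that origin simultaneously disposes of the degenerate-configuration difficulty you flag at the end. Instead of inserting cusps one by one, the paper inserts cusp-carrying \emph{pairs of pants}, each contributing either one or two free edges, via two moves: (i) subdivide an edge and hang one free edge from the new node; (ii) subdivide an edge, attach a pendant edge to a fresh node, and hang two free edges there. One first chooses how to write $n$ as a sum of $1$'s and $2$'s---there are at most $n$ such partitions---and then performs the (at most $n$) insertions, each selecting one of at most $3g-3+3n$ edges. This gives $n\cdot(3g-3+3n)^n$ directly, and two-cusp nodes are handled by move (ii) rather than by any separate case analysis. (The only configuration not reached is a node with three free edges, which occurs only when the whole surface is a single pair of pants.)
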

	
	\begin{proof}
	 Suppose that $G$ is a graph corresponding to a surface of genus $g$ with $n$ cusps. Then we note that $G$ can be constructed by taking a 3-regular graph corresponding to a surface of genus $g$ and adding on the free edges in two possible ways. To add a node corresponding to a pair of pants with one cusp, place a node in the middle of any edge on the graph and add a free edge emanating from that new node. To add a node corresponding to a pair of pants with two cusps, place a node in the middle of any edge on the graph and attach a new edge to it with a new node on the other end; now place two free edges off of this new node. These two moves are shown in Figure 3. To see that any pseudo-3-regular $G$ can be constructed in this way, simply run either or both of these steps in reverse until we obtain a new graph $G'$ which has no free edges. This graph $G'$ will be 3-regular as each of the above moves preserves the 3-regularity of a graph. Thus, any pseudo-3-regular graph which does not correspond to a pair of pants can be constructed in this way. (A pair of pants may have three cusps.)
	\begin{figure}[H]
	\begin{center}
		\includegraphics[width=1.0\textwidth]{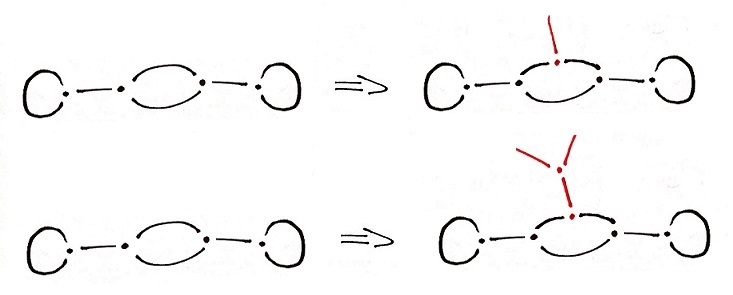} 
	\end{center}
	\caption{}
	\end{figure}
	Now, suppose we are given a 3-regular graph corresponding to a surface of genus $g$ and wish to construct a pseudo-3-regular graph corresponding to a surface of genus $g$ with $n$ cusps. Then we must attach $n$ cusps to this graph, either one or two at a time. As there are at most $n$ ways to partition $n$ as a sum of $1$s and $2$s, there are at most $n$ different sets of one- and two-cusped pairs of pants which will result in a surface with $n$ cusps. Further, each of these collections has at most $n$ elements and as there are at most $3g-3+3n$ edges at any time, they can be attached to the attached to a graph corresponding to a surface of genus $g$ in at most $(3g-3+3n)^n$ ways. As there are at most $g^{3g}$ 3-regular graphs corresponding to a surface of genus $g$ \cite[Thm 3.5.3]{Buser2}, this yields an upper bound of $ng^{3g}(3g-3+3n)^n$ possible pseudo-3-regular graphs corresponding to a surface of genus $g$ with $n$ cusps.
	\end{proof}
	We conclude this section with the definition of a constant to be used in the main result, which will just correspond to the number of graphs corresponding to a surface of a given genus and with given number of cusps:
	\begin{defn}
		We define $C(g,n)$ to be the following upper bound on the number of pseudo-3-regular graphs corresponding to surfaces of genus $g$ with $n$ cusps:
		\begin{equation*}
				C(g,n) = \begin{cases}
					ng^{3g}(3g-3+3n)^n, & \text{if } n \not= 0 \\
					g^{3g}, & \text{if } n = 0
				\end{cases}
		\end{equation*}
	\end{defn}
	
	\section{Background and Notation: Twist Parameters}
		For fixed surfaces $S, S'$ where $S$ has a chosen pants decomposition, we will use a collection of curves called the transverse curves to the pants decomposition on $S$ to obtain an upper bound on the number of possible twist parameters in a given pants decomposition of $S'$. The main tool will be Proposition 3.3. Before proving this fact, we provide a more detailed discussion of the twist parameter and construct this family of transverse curves. \\ \tab
		Suppose we have a standard pair of pants $Y$ - that is, one without cusps. Then for each pair of boundary components of $Y$ there exists a unique common perpendicular between these two components. Collectively, these three common perpendiculars decompose $Y$ into a pasting of two isometric right-angled, geodesic hexagons (as the lengths of three non-adjacent sides of a right-angled, hyperbolic geodesic hexagon determine the hexagon \cite[Thm 2.4.2]{Buser2}). 
		\begin{defn}
			A standard parametrization of the boundary of a pair of pants $Y$ is a parametrization which, for each boundary component, starts at one of the seams where the two hexagons forming $Y$ are glued and proceeds around that boundary component (in either direction) at constant speed and in unit time. 
		\end{defn}
If $Y$ has cusps, than an identical construction exists as there is a unique geodesic from a boundary component out of a cusp so that the previous definition also applied to degenerate pairs of pants. \\ \tab
	Now suppose that we have two pairs of pants $Y,Y'$ with standard parametrizations of their boundary which have boundary components $\gamma$ on $Y$ and $\gamma'$ on $Y'$ such that $l(\gamma) = l(\gamma')$. Then the glueing of $Y$ and $Y'$ along these boundary components with twist parameter $\alpha \in [-\frac{1}{2}, \frac{1}{2}]$ is given by the topological quotient $(Y \sqcup Y')/(\gamma(t) \sim \gamma'(\alpha - t))$ for all $t \in [0,1]$. Different choices of $\alpha$ yield non-isometric surfaces. If we have a pants decomposition $\gamma_1,...,\gamma_l$ on a surface $S$, then the twist parameter at a geodesic $\gamma_i$, then the twist parameter about $\gamma_i$ is the twist parameter corresponding to the glueing of the pairs of pants attached about that geodesic. Note, however, that the very definition of the twist parameter is entirely dependent on our boundary parametrizations and so is entirely non-canonical. 
	\begin{defn}
		 We denote the surface $S \in \mathcal{F}(g,n)$ based on the pseudo-3-regular graph $G$ with length and twist parameters, respectively, $L = (l_1,...,l_{3g-3 + n})$ and $A = (\alpha_1,...,\alpha_{3g-3 + n})$ by $S(G,L,A)$.
	\end{defn}
	Now suppose $S$ is a surface with pants decomposition $\gamma_1,...,\gamma_l$ where these curves $\gamma_i$ are all given the standard parametrization. Then as we have made a choice of parametrization at each boundary component, the twist parameter along each component is well-defined and, up to isometry of $S$, can be taken to lie in $[-\frac{1}{2},\frac{1}{2}]$. Thus, we have obtained a decomposition of our surface $S$ as a surface $S(G,L,A)$. Thus,
	\begin{prop}
		Every element of $\mathcal{F}(g,n)$ is isometric to some $S(G,L,A)$ with $G$ a pseudo-3-regular graph, $L \in \mathbb{R}_{+}^{3g-3+n}$, $A \in [-\frac{1}{2},\frac{1}{2}]^{3g-3+n}$.
	\end{prop}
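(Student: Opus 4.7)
The plan is that this proposition essentially formalizes the preceding discussion, so the proof assembles three ingredients: existence of a pants decomposition, the combinatorial encoding from Section 2, and a normalization step for the twist parameters.

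First I would invoke the standard fact (Buser, Thm 3.6.4 and its extension to cusped surfaces) that every $S \in \mathcal{F}(g,n)$ admits a pants decomposition by $3g-3+n$ simple closed geodesics $\gamma_1,\dots,\gamma_{3g-3+n}$ whose complement is a disjoint union of $2g-2+n$ (possibly degenerate) pairs of pants. From this I read off the pseudo-3-regular graph $G$ by the Section 2 recipe: one node per pair of pants, one edge between nodes for each glued boundary geodesic, and one free edge per cusp. The length vector $L = (l(\gamma_1),\dots,l(\gamma_{3g-3+n})) \in \mathbb{R}_+^{3g-3+n}$ is then immediate.

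For the twist vector, I would equip each $\gamma_i$ with the standard parametrization of Definition 3.1 coming from the pants on either side (the cusped case being handled by the remark following that definition, which uses the unique perpendicular geodesic from a boundary component to a cusp). The gluing of the two adjacent pants realizing $S$ along $\gamma_i$ then differs from the zero-twist gluing by some amount $\tilde{\alpha}_i \in \mathbb{R}/\mathbb{Z}$, well-defined modulo $1$ because a full Dehn twist about $\gamma_i$ produces an isometric hyperbolic structure. Choosing the representative in $[-\tfrac12,\tfrac12]$ gives $\alpha_i$, and setting $A = (\alpha_1,\dots,\alpha_{3g-3+n})$ completes the data.

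To conclude that $S$ is isometric to $S(G,L,A)$, I would use that a hyperbolic pair of pants is determined up to isometry by its three boundary lengths (Buser, Thm 2.4.2, via the two right-angled hexagons), so $S$ and $S(G,L,A)$ are built by gluing isometric pieces under matching length and twist data; the piecewise isometries descend to a global isometry. The content here is largely organizational, fixing notation that will be used throughout the paper. The only point that is not completely formal is the normalization of the twist parameters, which genuinely uses that an integer twist shift is a Dehn twist and hence yields an isometric (not merely homeomorphic) surface.
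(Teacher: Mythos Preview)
Your proposal is correct and follows essentially the same approach as the paper: the paper does not give a separate formal proof, but rather states the proposition as a summary of the preceding discussion (pants decomposition exists, graph and lengths are read off as in Section~2, twist parameters with the standard parametrization are well-defined and, up to isometry, can be taken in $[-\tfrac12,\tfrac12]$). Your write-up is a more explicit version of exactly this argument, including the Dehn-twist justification for the normalization of $A$, which the paper leaves implicit.
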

	Again take $S$ with pants decomposition $\gamma_1,...,\gamma_l$ where these curves $\gamma_i$ are all given the standard parametrization. Then transverse geodesic $\delta_k$ is constructed as follows. \\ \tab
	Consider two pairs of pants $Y,Y'$ in the pants decomposition of $S$ glued together about a geodesic $\gamma_k$ with twist parameter $\alpha$. Allow $Y$ to have boundary components $\gamma_k$, $\xi_1$, and $\xi_2$ and $Y'$ to have boundary components $\gamma_k$, $\eta_1$, and $\eta_2$. Let $p,p'$ be the unique geodesics connecting $\gamma_k$ to $\xi_1, \eta_2$, respectively, oriented away from $\gamma_k$ (see Figure 4). We can the connect $p$ and $p'$ with an arc $c$ lying along $\gamma_k$ with magnitude equal to $|\alpha|l(\gamma_k)$ at $\gamma_k(0)$ in the direction of the twisting, oriented away from $p$. Then the transversal $\delta_k$ to $\gamma_k$ is given by the unique geodesic in the free homotopy class of the concatenated path $\xi_1 p^{-1} c p' \eta_1(p')^{-1} c^{-1} p$. Thus if we have any pair of pants decomposition on a surface $S$, we can isometrically embed the glued pairs of pants $Y$ and $Y'$ into $S$ about any geodesic $\gamma_k$ in the pants decomposition if we choose the appropriate lengths for the boundary geodesics. After embedding, the embedded curve $i(\delta_k)$ is the canonical curve transverse to $\gamma_k$ in $S$. \\ \tab
	For surfaces with cusps, this same construction works after taking a boundary curve to a collar neighborhood of the cusp and treating that as one of the boundary geodesics in the above process.
		\begin{figure}[H]
		\begin{center}
			\includegraphics[width=0.8\textwidth]{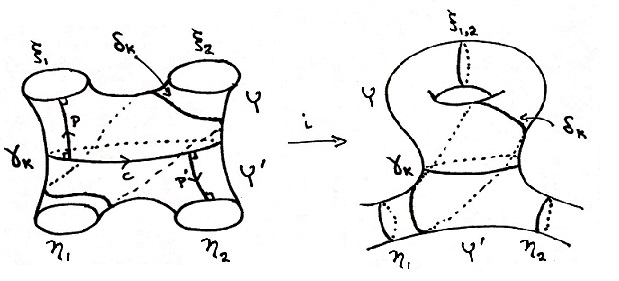} 
		\end{center}
		\caption{}
	\end{figure}
	
	\begin{prop}
		Allow $S$ to be a hyperbolic surface with pants decomposition $\gamma_1,...,\gamma_m$ with the standard parametrization. Then, for each $k$, there are at most two values of the twist parameter $\alpha_k$ at $\gamma_k$ which result in a given value for the length of the transversal $\delta_k$ at $\gamma_k$.
	\end{prop}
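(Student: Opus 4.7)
The strategy is to fix every length and twist parameter other than $\alpha_k$ and to show that the function $\alpha_k \mapsto l(\delta_k)$ is strictly convex; the claim then follows immediately, since a strictly convex real function on $[-\tfrac12,\tfrac12]$ meets each horizontal line in at most two points. To see the convexity, I would work in the universal cover $\mathbb{H}^2$, lifting $\gamma_k$ to a geodesic line $\tilde\gamma_k$ and lifting the two adjacent pairs of pants $Y$ and $Y'$ on either side of $\tilde\gamma_k$. Because $Y$ and $Y'$ are rigid (determined up to isometry by the lengths of their boundary components via the right-angled hexagon formulas of \cite[Thm.~2.4.1]{Buser2}), the only dependence on $\alpha_k$ comes from translating the $Y'$-lift along $\tilde\gamma_k$ by signed distance $\alpha_k\, l(\gamma_k)$.

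Now $\delta_k$ lifts to the axis of a hyperbolic isometry $T_{\alpha_k}\in\mathrm{PSL}_2(\mathbb{R})$, and $l(\delta_k)$ equals the translation length of $T_{\alpha_k}$. Tracing the concatenation $\xi_1\, p^{-1}\, c\, p'\, \eta_1\,(p')^{-1}\, c^{-1}\, p$ through the cover, $T_{\alpha_k}$ factors as a product of fixed isometries (depending only on $l(\gamma_k)$, $l(\xi_i)$, $l(\eta_i)$) interleaved with two translations of signed distance $\alpha_k\, l(\gamma_k)$ along $\tilde\gamma_k$. Applying the trace identity $2\cosh(l(\delta_k)/2) = |\mathrm{tr}(T_{\alpha_k})|$ and expanding, one obtains an identity of the form
\[
\cosh\bigl(l(\delta_k)\bigr) \;=\; A + B\,\cosh\bigl(\alpha_k\, l(\gamma_k) - c\bigr),
\]
with constants $A, B, c$ determined by the frozen data and $B>0$. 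The right hand side is strictly convex and unimodal in $\alpha_k$, attaining its unique minimum at $\alpha_k = c/l(\gamma_k)$, so the equation $l(\delta_k)=v$ has at most two solutions in $\alpha_k$.

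The main obstacle is the explicit bookkeeping needed to arrive at the above formula: tracking seams, orientations, and signs carefully enough that the displacement in the universal cover is exactly $\alpha_k\, l(\gamma_k)$, and verifying that the coefficient $B$ is strictly positive. A cleaner alternative that sidesteps this computation is to invoke Kerckhoff's convexity theorem: along the Fenchel–Nielsen earthquake in $\alpha_k$, the length of any simple closed geodesic essentially intersecting $\gamma_k$ is strictly convex in $\alpha_k$; since $\delta_k$ is built to cross $\gamma_k$ transversely, strict convexity applies. The cusped case is identical once a perpendicular to a missing boundary is replaced by the unique geodesic from $\gamma_k$ asymptotic to the cusp, with the same convexity argument applying verbatim.
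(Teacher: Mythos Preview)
Your proposal is correct and lands in the same place as the paper, but by a different route. The paper's argument for the closed case is entirely explicit hyperbolic trigonometry: it builds an auxiliary pair of pants with boundary $\xi_1,\eta_2,\delta_k$, applies the right-angled hexagon law \cite[Thm.~2.4.1(i)]{Buser2} to write $\cosh(\tfrac12 l(\delta_k))$ as an affine function of $\cosh(l(\phi))$, then uses the crossed-hexagon law \cite[Thm.~2.4.4]{Buser2} to write $\cosh(l(\phi))$ as an affine function of $\cosh(\alpha_k\,l(\gamma_k))$; composing gives a formula of exactly the shape you predicted (with shift $c=0$ in the standard parametrization, and with $\tfrac12 l(\delta_k)$ rather than $l(\delta_k)$ on the left), and the two-to-one property of $\cosh$ finishes it. Your trace-in-the-cover computation would reproduce this identity after the bookkeeping you flag; the paper simply sidesteps that bookkeeping by working with the intrinsic hexagon decomposition instead of matrices. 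For the cusped case the paper does precisely what your alternative suggests: it invokes an external convexity result for length functions along twist deformations (it cites Bestvina--Bromberg--Fujiwara--Souto rather than Kerckhoff, but the content is the same). So your conceptual framing via strict convexity is sound and in fact matches how the paper handles the general case; what the paper's closed-surface argument buys is an explicit closed-form identity with no appeal to outside theorems.
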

	
	\begin{proof}
		Here we prove only the case for closed surfaces as the geometric picture is more tractable. The general case follows directly from the main result in \cite{Bromberg}, which shows that the length of the transversal is a convex function of the twist parameter about a geodesic in a pants decomposition.\\ \tab
		For ease of notation, for any curve $\gamma$ we allow $\text{cosh}(\gamma) := \text{cosh}(l(\gamma))$. We allow the notation of Figure 4. Then we first note that $p^{-1}cp'$ is homotopic with endpoints gliding to some unique geodesic $\phi$ which intersects both $\xi_1$ and $\eta_2$ perpendicularly \cite[Thm 1.6.6(i)]{Buser2}. We claim that the unique curve in the free homotopy class of $\psi = \xi_1 \phi \eta_2 \phi^{-1}$ is disjoint from $\delta_k$. First, cut open the surface $S$ along $\phi$ to create a surface with piecewise geodesic parametrized boundary. By examination of any of the concatenation points of $\psi$, we conclude that the unique geodesic in the free homotopy class of $\psi$ must include non-boundary points. It is a standard result that the unique geodesic in the free homotopy class of a curve with points on the interior of a surface is entirely contained in the surface \cite[Thm 1.6.6(ii)]{Buser2}. As the boundary of this cut-open surface is exactly $\psi$, this says that $\psi$ and $\delta_k$ are disjoint. \\ \tab
		Thus, $\xi_1, \eta_2,$ and $\delta_k$ form the boundary curves of a pair of pants $Z$, and $\phi$ is the unique geodesic orthogonal to both $\xi_1$ and $\eta_2$ in $Z$. Thus, $Z$ is a glueing of two right-angled hyperbolic hexagons of the following form:
		\begin{figure}[H]
		\begin{center}
			\includegraphics[width=0.5\textwidth]{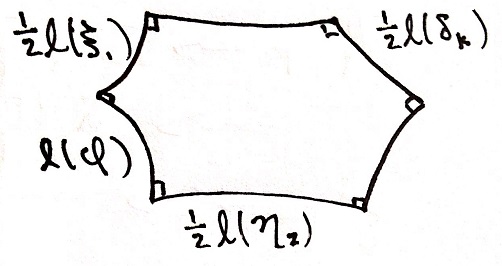} 
		\end{center}
		\caption{}
	\end{figure}
		Then from a relation between the sides of a right-angled, geodesic hexagon \cite[Thm 2.4.1(i)]{Buser2}:
			\begin{equation*}
				\text{cosh}(\frac{1}{2}\delta_k) = \text{sinh}(\frac{1}{2}\xi_1)\text{sinh}(\frac{1}{2}\eta_2)\text{cosh}(\phi) - \text{cosh}(\frac{1}{2}\xi_1)\text{cosh}(\frac{1}{2}\eta_2)
			\end{equation*}
			Now, $\psi = \xi_1 \phi \eta_2 \phi^{-1}$. As the lengths of both $\xi_1$ and $\eta_2$ are fixed, the length of $\psi$ depends only upon the length of $\phi$. We now analyze the dependence of $\phi$ on $\alpha$. Note that $\phi, p, p',$ and $c$ form a crossed right-angled hexagon if we take the remaining sides along $\xi_1$ and $\eta_2$ connecting the edges which intersect these sides. This situation is depicted in Figure 6. 
		\begin{figure}[H]
		\begin{center}
			\includegraphics[width=0.4\textwidth]{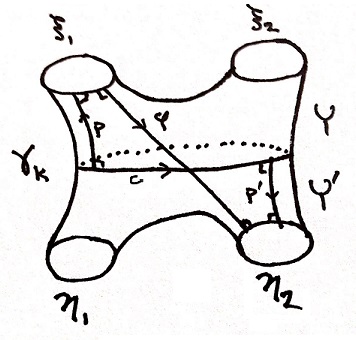} 
		\end{center}
		\caption{}
	\end{figure}
	We can then apply Theorem 2.4.4 from \cite{Buser2} to conclude that:
				\begin{equation*}
					\text{cosh}(\phi) = \text{sinh}(p)\text{sinh}(p')\text{cosh}(c) + \text{cosh}(p)\text{cosh}(p')
				\end{equation*}
	Noting that $l(c) = |\alpha| l(\gamma_k)$, substituting into our previous expression yields:
				\begin{align*}
					\text{cosh}(\frac{1}{2}\delta_k) &= \text{sinh}(\frac{1}{2}\xi_1)\text{sinh}(\frac{1}{2}\eta_2)\{\text{sinh}(p)\text{sinh}(p')\text{cosh}(\alpha l(\gamma_k)) + \text{cosh}(p)\text{cosh}(p')\} \\
					& \hspace{70mm} - \text{cosh}(\frac{1}{2}\xi_1)\text{cosh}(\frac{1}{2}\eta_2)
				\end{align*}
			As the hyperbolic cosine function is 2-to-1, this leaves only two possible choices for the twist parameter.
	\end{proof}
	
	
\section{Bers' Constants and length spectrum growth rate for Surfaces with Cusps}

	\begin{defn}
		A collection of hyperbolic surfaces is said to satisfy length spectrum growth rate $f: \mathbb{R}^+ \rightarrow \mathbb{R}$ if the number of closed geodesics of length $\leq L$ which are not iterates of closed geodesics of length $\leq 2\text{arcsinh}(1)$ is bounded above by $f(L)$ for all $L$.
	\end{defn}
	It is well-known that the class of closed hyperbolic surfaces admits a Bers' constant of $26(g-1)$ and satisfies length spectrum growth rate $f(L) = (g-1)\exp\{L + 6\}$. In this section, we compute similar bounds which apply to arbitrary $S \in \mathcal{F}(g,n)$. The former of these statements will be used in application of the main result and the latter will be essential to prove that result itself. The following is due to Buser:
	\begin{prop}
		The collection $\mathcal{F}(g,n)$ admits a Bers' constant of $10.13(3g - 3 + n)$.
	\end{prop}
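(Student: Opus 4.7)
My plan is to adapt Buser's inductive construction of a short pants decomposition, originally proved for closed surfaces, to the finite-area case with cusps. The basic scheme is to produce the curves $\gamma_1,\ldots,\gamma_{3g-3+n}$ of the decomposition one at a time: at stage $k+1$, let $\gamma_{k+1}$ be the shortest simple closed geodesic on $S$ that is both disjoint from each previously chosen $\gamma_i$ and not freely homotopic to any cusp or to any previously chosen curve. The aim is to control $l(\gamma_{k+1})$ uniformly in $k$ and in the intermediate choices.

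The quantitative input is a disjoint-area packing argument driven by the collar lemma. A simple closed geodesic $\gamma$ of length $l$ carries an embedded collar of half-width $w(l) = \operatorname{arcsinh}(1/\sinh(l/2))$ and hence of area $2l \sinh(w(l))$; dually, each cusp carries a standard embedded horocyclic neighborhood of a definite area that can be chosen so as not to meet the collars of the $\gamma_i$. By disjointness of the chosen curves, all of these collars together with the $n$ cusp neighborhoods are pairwise disjoint, so by Gauss--Bonnet their total area is at most $2\pi(2g-2+n)$. Rearranging gives an explicit upper bound on $l(\gamma_{k+1})$ in terms of the already-chosen lengths and of $n$, and an induction on $k$ together with an optimization of the cusp horocycle radii yields the linear bound $10.13(3g-3+n)$. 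The constant itself is what emerges from optimizing the tradeoff between the collar width $w(l)$ and the residual area $2\pi(2g-2+n) - (\text{cusps} + \text{earlier collars})$.

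The main technical obstacle will be verifying that the greedy procedure actually terminates in a pants decomposition rather than stalling on a subsurface with negative Euler characteristic. The standard resolution is: if some component of the complement of $\gamma_1,\ldots,\gamma_k$ has Euler characteristic less than $-1$, it supports a nontrivial simple closed geodesic which, by the area bookkeeping above, still obeys the same length bound and so can be chosen as $\gamma_{k+1}$. The construction therefore continues until exactly $3g-3+n$ disjoint curves have been produced, at which point each complementary component has Euler characteristic $-1$ and is either a standard pair of pants or a pair of pants degenerated at one or two cusps. A secondary subtlety is handling cusps that have already been absorbed into a pants piece: one must show that the horocyclic area budgeted for them in the packing estimate is not double-counted against the collar of any later $\gamma_i$, which is arranged by fixing the horocycle radius once and for all before the greedy step begins.
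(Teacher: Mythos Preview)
The paper's own proof is a two-line citation: it quotes Buser's Theorem~5.2.6, which already covers finite-area surfaces and gives $l(\gamma_k)\le 4k\log\bigl(4\pi(2g-2+n)/k\bigr)$, and then sets $k=3g-3+n$ to read off the constant $4\log(4\pi)\approx 10.13$. Your proposal instead reproves that theorem from scratch via the greedy collar-packing scheme, which is indeed Buser's underlying strategy; so at the level of architecture you are aligned with what is being cited, just doing far more work than the paper does.

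There is, however, a genuine gap in the quantitative step as you have written it. You say that the collars of the chosen curves together with the cusp neighborhoods have total area at most $2\pi(2g-2+n)$ and that ``rearranging gives an explicit upper bound on $l(\gamma_{k+1})$.'' It does not. The standard collar around a geodesic of length $l$ has area $2l\sinh\bigl(w(l)\bigr)=2l/\sinh(l/2)$, which is a \emph{decreasing} function of $l$ and tends to $0$ as $l\to\infty$. A very long $\gamma_{k+1}$ therefore has a very thin collar and causes no violation of the area budget; the packing inequality you wrote is consistent with $l(\gamma_{k+1})$ being arbitrarily large. Collar packing in this form bounds the number of \emph{short} disjoint geodesics, not the length of the next one.

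The mechanism Buser actually uses is different in an essential way: after cutting along $\gamma_1,\ldots,\gamma_k$, one works in a complementary component $Q$ with geodesic boundary and grows half-collars of width $r$ around $\partial Q$ (and horoball neighborhoods of the cusps). These half-collars have area $\ell(\partial Q)\sinh r$, which \emph{increases} with $r$. By minimality of $\gamma_{k+1}$ in $Q$, the half-collars stay embedded until $r$ reaches a value comparable to $l(\gamma_{k+1})$; comparing their area to $\operatorname{area}(Q)=2\pi|\chi(Q)|$ then yields the logarithmic bound $l(\gamma_{k+1})\lesssim \log\bigl(|\chi|/\ell(\partial Q)\bigr)$ in the relevant regime, and the induction produces $4k\log\bigl(4\pi(2g-2+n)/k\bigr)$. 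If you want to carry out the argument rather than cite it, this is the piece you need to replace your packing step with.
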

		
	\begin{proof}
		It is shown in \cite[Thm 5.2.6]{Buser2} that a hyperbolic surface of this type admits a pants decomposition $\gamma_1,...,\gamma_{3g-3+n}$ such that
			\begin{equation*}
				l(\gamma_k) \leq 4k \log \left(\frac{4\pi(2g - 2 + n)}{k}\right).
			\end{equation*}
		Allowing $k = 3g - 3 + n$ yields the above bound.
	\end{proof}
	
	We now show that $\mathcal{F}(g,n)$ satisfies a certain length spectrum growth rate. The proof proceeds by constructing for each curve of length $\leq L$ a corresponding curve passing through a point in some maximal set of points on the surface with distance bounded pairwise by at least $2\text{arcsinh}(1)$ which lie outside of collar neighborhoods. Outside of the collar, the surface will be thick enough to cover the surface by thick embedded balls and then we can pull into the universal cover and make an area argument. 
	
	\begin{prop}
		$\mathcal{F}(g,n)$ satisfies length spectrum growth rate $f(L) = \frac{1}{2}(2g - 2 + n)\exp\{L + 6\}$.
	\end{prop}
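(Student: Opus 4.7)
The plan is to count closed geodesics of length at most $L$ which are not iterates of short geodesics by first reducing to geodesics passing near a small set of points in a thick part of the surface, and then bounding the number of such geodesics per point by a volume comparison in the universal cover.

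First I would define the thick part $\mathcal{T} \subset S$ as the complement of the cusp neighborhoods and of the collars of simple closed geodesics of length at most $2\text{arcsinh}(1)$; by the collar theorem these regions are disjoint and embedded, and the injectivity radius at every point of $\mathcal{T}$ is at least $\text{arcsinh}(1)$. The key topological observation is that any closed geodesic not an iterate of one of length at most $2\text{arcsinh}(1)$ must meet $\mathcal{T}$, since the only closed geodesics in a standard collar are iterates of the core curve and cusp neighborhoods contain none. I would then choose a maximal set of points $p_1, \ldots, p_N \in \mathcal{T}$ with pairwise distance at least $2\text{arcsinh}(1)$; the resulting embedded balls of radius $\text{arcsinh}(1)$ about the $p_i$ are disjoint, each of area $2\pi(\sqrt{2}-1)$, so Gauss--Bonnet yields $N \leq (2g-2+n)/(\sqrt{2}-1)$, and by maximality every counted geodesic passes within distance $2\text{arcsinh}(1)$ of some $p_i$.

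For each counted $\gamma$ and a nearby $p_i$, concatenating a shortest arc from $p_i$ to a point $q \in \gamma \cap \mathcal{T}$ with $\gamma$ based at $q$ yields a loop at $p_i$ of length at most $L + 4\text{arcsinh}(1)$, which lifts in the universal cover to a geodesic segment from a fixed lift $\tilde{p}_i$ to a translate $g \tilde{p}_i$ at distance at most $L + 4\text{arcsinh}(1)$; distinct $\gamma$ produce distinct conjugacy classes of such $g$. The number of orbit points $g \tilde{p}_i$ within this distance is in turn controlled by noting that the embedded balls of radius $\text{arcsinh}(1)$ about the $g\tilde{p}_i$ are disjoint and all lie in the ball of radius $L + 5\text{arcsinh}(1)$ about $\tilde{p}_i$; comparing areas in $\mathbb{H}^2$, together with $\cosh(x) \leq e^x$ and $e^{\text{arcsinh}(1)} = 1 + \sqrt{2}$, and summing over $i$, then produces a bound of the desired shape $C(2g-2+n)e^L$.

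I expect the main obstacle to be tightening the constants in these covering and counting steps so as to arrive at the precise coefficient $\tfrac{1}{2}e^6$ asserted in the statement, as the naive estimates above overshoot this value by a modest factor. Shrinking the separation radius used to select the $p_i$, or more carefully accounting for the fact that a geodesic passing near several of them is still only counted once, should be enough to close the gap.
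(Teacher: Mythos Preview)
Your proposal is correct and follows essentially the same argument as the paper: thick part via the collar theorem, a maximal $r$-separated net on the thick part bounded in size by an area comparison, reduction of each counted geodesic to a based loop at a net point, and an orbit-point count in $\mathbb{H}^2$ by packing disjoint balls inside a large ball. The paper takes the separation radius to be $r=\operatorname{arcsinh}(1)$ rather than $2r$ (so disjoint balls of radius $r/2$ on the surface and loop length $\le L+2r$), which is exactly the ``shrink the separation radius'' fix you anticipate and is what brings the constant down to $\tfrac{1}{2}e^{6}$.
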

	
	\begin{proof}
		Let $\beta_1,...,\beta_k$ for $k \leq 3g-3+n$ be the simple closed geodesics of length $l(\beta_i) \leq 2 \text{arcsinh}(1)$; the size of this set has the given upper bound as the collar theorem for surfaces with cusps \cite[Thm 4.4.6]{Buser2} says that geodesics of length below $2\text{arcsinh}(1)$ must be simple; then the upper bound on $k$ follows as there are at most $3g-3+n$ disjoint simple closed curves on $S$ (or else we would have a pants decomposition with too many pairs of pants). Let $p_1,...,p_s$ be a maximal set of points on $S$ such that the following two conditions hold:
		\begin{enumerate}
			\item At each $p_\sigma$, the injectivity radius $r_{p_\sigma}(S)$ of $S$ satisfies $r_{p_\sigma} \geq r := \text{arcsinh}(1)$,
			\item $\text{dist}(p_{\sigma}, p_{\tau}) \geq r$ for $\sigma, \tau = 1,...,s, \sigma \not= \tau$.
		\end{enumerate}
	Since disks of radius $r/2$ around $p_1,...,p_s$ are pairwise disjoint and have area equal to $2\pi(\text{cosh}(\frac{r}{2}) - 1)$, and the area of our surface is $2\pi(2g - 2 + n)$, it follows that
		\begin{equation}
			s \leq (2g - 2 + n)/(\text{cosh}(\frac{r}{2} - 1)).
		\end{equation}
	A closed geodesic $\gamma$ of length $> 2r$ on $S$ which is not an iterate of $\beta_{\kappa}$ for $\kappa \in {1,...,k}$, cannot be entirely contained in the collar
		\begin{equation*}
			\mathcal{C}(\beta_{\kappa}) := \{p \in S \mid \text{sinh}(\text{dist}(\beta_{\kappa}, p))\text{sinh}(\frac{1}{2}l(\beta_{\kappa})) \leq 1\},
		\end{equation*}
	as every nontrivial curve in the collar is homotopic to an iterate of $\beta_{\kappa}$. As $\beta_{\kappa}$ is a geodesic, this implies that every geodesic contained in this collar is an iterate of this curve. Now, again by the collar theorem points outside of the collars of the $\beta_i$ have an injectivity radius $\geq r$, so $\gamma$ hits some point $x$ with such an injectivity radius. By the maximality of the set $p_1,...p_s$, this point is a distance $d \leq r$ from some $p_\sigma$. It follows that $c$ is homotopic to a geodesic loop at $p_\sigma$ of length $\leq l(c) + 2r$. \\ \tab
	Now we estimate the number of geodesic loops at $p_\sigma$ of length $\leq L + 2r$. By then accounting for each point $p_{\sigma}$ we will obtain an upper bound on the number of distinct geodesics of length at least $L$. We use a universal covering $\mathbb{H} \rightarrow S$ and lift the loops into geodesic arcs in $\mathbb{H}$ with a common initial point $\bar{p}_{\sigma}$. Since the injectivity radius of $S$ at $p_{\sigma}$ is $r_{p_{\sigma}}(S) \geq r$, the endpoints of these arcs have pairwise distances $\geq 2r$. \\ \tab
	We count the number of possible endpoints by comparing areas. The endpoints of the lifted arcs all have to be within the ball of radius $L + 2r$ about $\bar{p}_{\sigma}$ in order to satisfy the length bound. The condition that the endpoint be separated by at least a distance of $2r$ means that the balls of radius $r$ about the endpoints are disjoint. These balls of radius $r$ are all contained in the ball of radius $L+3r$ as their centers are no further than $L+2r$ from $\bar{p}_{\sigma}$. \\ \tab
	Comparison of the areas of the balls of radius $L+3r$ and radius $r$ yields:
		\begin{equation*}
			\frac{\text{cosh}(L+3r) - 1}{\text{cosh}(r) - 1} \leq \frac{e^{L + 3r}}{2(\text{cosh}(r) - 1)}
		\end{equation*}
	such loops. Noting that $3r < 6$, we multiply by $s$ to account for every possible initial point $p_{\sigma}$ to obtain:
		\begin{equation*}
			s\frac{e^{L + 3r}}{2(\text{cosh}(r) - 1)} \leq \frac{(2g - 2 + n)e^{L+3r}}{2(\text{cosh}(r) - 1)(\text{cosh}(\frac{r}{2} - 1)} \leq \frac{1}{2}(2g - 2 + n)e^{L+3r}
		\end{equation*}
		possible geodesics.
	\end{proof}

\section{Proof of the Main Result}
	We follow the approach in Buser \cite[Chp 13]{Buser2}. Fix a surface $S \in \mathcal{F}(g,n)$ so that $S$ has Bers' constant $B$ and systole length at least $I$. We wish to bound the number of possible length isospectral surfaces with this same Bers' constant and systole length bound. The existence of a pants decomposition and the Bers' constant $B$ establishes the following:
	\begin{prop}
		If $S$ and $S'$ are length isospectral and share the same Bers' constant $B$ and lower bound on systole length $I$, then $S'$ is isometric to a surface $S(G,L,A)$ where $-\frac{1}{2} \leq \alpha_1,...,\alpha_{3g-3+n} \leq \frac{1}{2}$ and $B \geq l(\gamma_1), \dots, l(\gamma_{3g-3+n}) \geq I$.
	\end{prop}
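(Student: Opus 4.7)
The plan is to assemble this essentially from results already stated in the paper, together with the hypotheses. My proof would proceed in four short steps.

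First, I would observe that since $S$ and $S'$ are length isospectral, and since the length spectrum determines both the genus (by Theorem 9.2.14 in Buser) and the number of cusps (by the cited result of Borthwick), $S'$ lies in $\mathcal{F}(g,n)$. This is needed before I can even talk about $S'$ admitting a pseudo-3-regular graph with $3g-3+n$ length parameters.

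Next, I would invoke the Bers' constant hypothesis: by assumption $S'$ admits a pants decomposition $\gamma_1, \dots, \gamma_{3g-3+n}$ with $l(\gamma_i) \leq B$ for every $i$. This gives the upper bound on the length parameters. The lower bound $l(\gamma_i) \geq I$ is immediate from the definition of systole length and the hypothesis that the systole of $S'$ has length at least $I$: every nontrivial closed geodesic, in particular each $\gamma_i$, has length at least that of a systole.

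Third, I would pass from this pants decomposition to a surface of the form $S(G,L,A)$ by applying Proposition 3.2. That proposition asserts that every element of $\mathcal{F}(g,n)$ is isometric to some $S(G,L,A)$ with $G$ a pseudo-3-regular graph and $A \in [-\frac{1}{2},\frac{1}{2}]^{3g-3+n}$; moreover, the proof of Proposition 3.2 shows that once a pants decomposition is fixed and given the standard parametrization on each boundary curve, the twist parameters can be taken to lie in $[-\frac{1}{2}, \frac{1}{2}]$ up to isometry. Applying this to the specific pants decomposition found above yields a representation $S' \cong S(G,L,A)$ in which the length coordinates of $L$ are precisely $l(\gamma_1), \dots, l(\gamma_{3g-3+n})$ and the twist coordinates $\alpha_i$ lie in $[-\frac{1}{2}, \frac{1}{2}]$.

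There is no real obstacle here — the statement is essentially a bookkeeping consequence of the ambient framework set up in Sections 2 and 3 together with the two hypotheses. The only subtlety worth flagging in the write-up is that one must explicitly use isospectrality to get $S' \in \mathcal{F}(g,n)$ before invoking the Bers' constant, since the Bers' constant bound is formulated for that moduli space; after that, everything follows by direct quotation of Proposition 3.2 and the definitions.
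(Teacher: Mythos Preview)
Your proposal is correct and follows essentially the same approach as the paper, which treats this proposition as an immediate consequence of the definitions: the paper simply remarks that the bound on length parameters comes from the Bers' constant and the bound on twist parameters from the earlier discussion (i.e., Proposition 3.2). Your write-up is in fact more careful than the paper's, since you explicitly use isospectrality to place $S'$ in $\mathcal{F}(g,n)$ before invoking the $(3g-3+n)$-parameter description.
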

	The bound on the length parameters comes from the definition of the Bers' constant the bound on twist parameters simply comes from our previous discussion of the twist parameters. We now proceed by counting the number of possible graphs, length parameters and twist parameters. \\ \tab
 In Section 2, we bounded the number of graphs which correspond to a surface of genus $g$ with $n$ cusps by $C(g,n)$, where $C(g,n)$ is as defined in Definition 2.5. For the length parameters, the estimate on length spectrum growth rate in Proposition 4.3 implies that there are at most $\frac{1}{2}(2g-2+n)\exp\{B + 6\}$ geodesics of length less than $B$ on the fixed surface $S$. As $S'$ is length isospectral to $S$, we have this same upper bound for the number of geodesics of length less than $B$ on $S'$. As there are $3g-3+n$ lengths in our pants decomposition, we conclude that:
	\begin{lem}
		For each graph $G$ corresponding to $S'$, the number of possible choices of length parameter $L$ on $S'$ is bounded above by $(\frac{1}{2}(2g-2+n))^{3g-3+n}\exp\{(3g - 3 + n)(B + 6)\}$.
	\end{lem}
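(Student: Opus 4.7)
The plan is to combine three ingredients already established in the paper: the length parameter bounds from Proposition 5.1, the length spectrum growth rate from Proposition 4.3, and the length isospectrality of $S$ and $S'$. The argument reduces to bounding the number of possible values of each coordinate $l_i$ separately, then taking a product over the $3g-3+n$ coordinates.

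First, fix a graph $G$ corresponding to $S'$. By Proposition 5.1, any length parameter $L = (l_1,\dots,l_{3g-3+n})$ arising from a pants decomposition of $S'$ encoded by $G$ has each coordinate satisfying $I \leq l_i \leq B$, and each $l_i$ is by construction the length of one of the simple closed geodesics $\gamma_i$ in this pants decomposition. In particular, each $l_i$ is the length of some closed geodesic on $S'$ of length at most $B$.

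Next, I would invoke length isospectrality: the multiset of lengths of closed geodesics of $S'$ of length at most $B$ coincides with that of $S$, and in particular the sets of distinct such lengths agree. By Proposition 4.3 applied with growth parameter $B$, the number of closed geodesics on $S$ of length at most $B$ is bounded by $f(B) = \tfrac{1}{2}(2g-2+n)\exp\{B+6\}$. Therefore each individual $l_i$ takes at most $f(B)$ possible values, and multiplying over the $3g-3+n$ coordinates of $L$ yields the stated bound $\bigl(\tfrac{1}{2}(2g-2+n)\bigr)^{3g-3+n}\exp\{(3g-3+n)(B+6)\}$.

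I expect no substantive obstacle; the only point requiring mild care is that Proposition 4.3 technically counts only those geodesics on $S$ that are not iterates of geodesics of length at most $2\,\text{arcsinh}(1)$. Since each pants curve $\gamma_i$ is primitive and simple, at most $3g-3+n$ of the $l_i$ can equal the length of such an exceptionally short geodesic, and these finitely many exceptional values are readily absorbed into the constants of the bound being claimed, so no sharper accounting is needed.
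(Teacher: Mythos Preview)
Your proposal is correct and follows essentially the same argument as the paper: bound the number of geodesics on $S$ of length at most $B$ via Proposition~4.3, transfer this bound to $S'$ by isospectrality, then take the $(3g-3+n)$th power to account for each coordinate of $L$. You even flag a technicality the paper passes over in silence, namely that Proposition~4.3 excludes iterates of short geodesics; your remark that the at most $3g-3+n$ short primitive geodesics contribute only negligibly many additional values is the right way to dispose of it.
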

All that is left to count are the twist parameters. This is facilitated by our bound $I$ on the systole length of $S$, which is equivalent placing a lower bound of $I$ on the lengths of closed geodesics on $S'$. Recall from the previous part that the length of $\delta_j$ determines the twist parameter $\alpha_j$ corresponding to $\gamma_j$ up to just two possible choices.

\begin{prop}
	Allow $S'$ to be a hyperbolic surface with Bers' constant $B$ and pants decomposition $\gamma_1,...,\gamma_m$ with $l(\gamma_j) \leq B$ for all $j$, then each of the transversals $\delta_j$ to $\gamma_j$ satisfies $l(\delta_j) \leq 3B - 4\log(I) + 12\log(2)$.
\end{prop}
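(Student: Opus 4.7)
The plan is to use the construction of $\delta_j$ from Section 3: since $\delta_j$ is by definition the unique geodesic in the free homotopy class of the concatenated loop $\xi_1\, p^{-1}\, c\, p'\, \eta_1\, (p')^{-1}\, c^{-1}\, p$, and a geodesic minimizes length in its free homotopy class, its length is bounded by the length of that loop:
\[
l(\delta_j) \leq l(\xi_1) + l(\eta_1) + 2\, l(p) + 2\, l(p') + 2\, l(c).
\]
The easy terms are handled directly by the hypotheses: $\xi_1, \eta_1$ are components of the pants decomposition and hence have length at most $B$ by the Bers' bound, while the twist arc satisfies $l(c) = |\alpha_j|\, l(\gamma_j) \leq B/2$. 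Together these contribute the $3B$ in the final bound.

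The technical step is bounding the common perpendiculars $l(p), l(p')$ inside the pairs of pants $Y, Y'$. For $p$, working in $Y$ with boundary lengths $l(\gamma_j), l(\xi_1), l(\xi_2)$, I would invoke Buser's right-angled hexagon identity (Thm.~2.4.1):
\[
\cosh(l(p)) = \frac{\cosh(l(\xi_2)/2) + \cosh(l(\gamma_j)/2)\cosh(l(\xi_1)/2)}{\sinh(l(\gamma_j)/2)\sinh(l(\xi_1)/2)},
\]
with the analogous identity for $l(p')$ in $Y'$. Upper bounding the numerator using $l \leq B$ together with the product-to-sum identity $\cosh(a/2)\cosh(b/2) \leq \cosh((a+b)/2)$ (rather than the trivial bound $\cosh(B/2)^2$, which would double the $B$ coefficient), lower bounding the denominator by $\sinh(x) \geq x$ combined with the systole bound $l \geq I$, and finally inverting via $\operatorname{arccosh}(y) \leq \log(2y)$ converts this into an explicit upper bound on each $l(p)$ linear in $B$ and in $-\log(I)$. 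Collecting the four perpendicular contributions with the earlier $3B$ and accumulating the various $\log(2)$ constants arising from $\cosh(x) \leq e^x$, $\sinh(x) \geq x$, and the $\operatorname{arccosh}$-to-$\log$ inversion then yields the stated inequality.

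The main obstacle is exactly this constant bookkeeping. Each loose application of $\cosh(x) \leq e^x$ inflates the $B$ coefficient, so the product-to-sum step in the numerator is essential to keep the $B$ contribution at $3$. Similarly, the $\sinh(l/2) \geq l/2$ estimate must be applied with care so that the systole bounds in the two $\sinh$-denominators combine across $p$ and $p'$ to produce exactly $-4\log(I)$ in the final inequality rather than an inflated coefficient; the accumulated $12\log(2)$ constant reflects precisely these elementary conversions together with the bound on $l(c)$.
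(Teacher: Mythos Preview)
Your outline does produce \emph{an} upper bound on $l(\delta_j)$, but it cannot produce the stated one: the coefficients of both $B$ and $-\log(I)$ come out strictly too large. The homotopy representative $\xi_1\, p^{-1}\, c\, p'\, \eta_1\, (p')^{-1}\, c^{-1}\, p$ is simply too long. You have already spent $l(\xi_1)+l(\eta_1)+2l(c)\le 3B$, so to hit the target the four perpendicular pieces $2l(p)+2l(p')$ would have to be bounded purely in terms of $I$ with no $B$-dependence. They cannot be. From the hexagon identity
\[
\cosh(l(p))=\frac{\cosh(l(\xi_2)/2)+\cosh(l(\gamma_j)/2)\cosh(l(\xi_1)/2)}{\sinh(l(\gamma_j)/2)\sinh(l(\xi_1)/2)},
\]
take $l(\gamma_j)=l(\xi_1)=I$ and $l(\xi_2)=B$: the numerator is of order $e^{B/2}$ while the denominator is of order $I^2$, so $l(p)\sim \tfrac{B}{2}-2\log(I)$. (Your product-to-sum remark does not help here; the dominant numerator term is $\cosh(l(\xi_2)/2)$, not the product, and in any case $\cosh(B/2)^2$ and $\cosh(B)$ have the same order.) Consequently $2l(p)+2l(p')$ contributes roughly $2B-8\log(I)$, and your total is on the order of $5B-8\log(I)$, not $3B-4\log(I)$.

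The paper avoids this by choosing a \emph{different} homotopy representative. Inside each pair of pants $Y$ it drops the common perpendicular $b$ from $\gamma_j$ to the opposite seam of the hexagon (the perpendicular between $\xi_1$ and $\xi_2$), splitting the hexagon into two right-angled pentagons. The pentagon relation $\cosh(\tfrac12 l(\gamma'))=\sinh(a)\sinh(b)$ with $a\ge \tfrac14 l(\gamma_j)\ge I/4$ gives
\[
b\le \operatorname{arcsinh}\!\left(\frac{\cosh(B/2)}{\sinh(I/4)}\right)\le \frac{B}{2}-\log(I)+3\log(2).
\]
Two such altitudes in $Y$ and two in $Y'$, joined by arcs on $\gamma_j$ of total length at most $l(\gamma_j)\le B$, form a curve freely homotopic to $\delta_j$ of length at most $B+4b=3B-4\log(I)+12\log(2)$. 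The point is that this representative never runs along $\xi_1$ or $\eta$ at all, and the single $\sinh$ in the pentagon formula (as opposed to the product $\sinh\cdot\sinh$ in your hexagon formula) yields only one factor of $I^{-1}$ per altitude. That geometric substitution --- altitude to the seam rather than perpendicular to the neighbouring cuff --- is the idea your sketch is missing.
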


\begin{proof}
	We first proceed in the case that our surface has no cusps. We will construct a curve homotopic to the the transversal about a geodesic by decomposing the pairs of pants about that geodesic into pairs of isometric geodesic hexagons. In particular, this homotopic curve will be 4 copies of the curve $b$ discussed below, along with two small arcs along the given geodesic.\\ \tab
	Let $Y$ be a pair of pants in the chosen pants decomposition of $S$ with boundary geodesics $\gamma_i, \gamma',$ and $\gamma''$. Then for each pair of boundary components there exists a unique geodesic intersecting both of them perpendicularly. These three segments decompose $Y$ into two isometric right-angled hexagons. Adding the common perpendicular $b$ from $\gamma_j$ to the opposite side in both hexagons decomposes $Y$ into four right-angled pentagons. As the hexagons were isometric, the set of four pentagons contains two isometric pairs. Thus, $\gamma_j$ is split into four arcs of two different lengths. Let $a$ be one of these segments on $\gamma_j$ with length greater than $\frac{1}{4}l(\gamma_j)$. \\ \tab
	\begin{figure}[H]
		\begin{center}
			\includegraphics[width=0.5\textwidth]{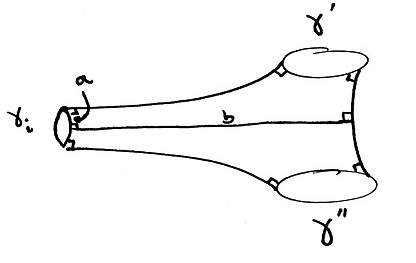} 
		\end{center}
		\caption{}
	\end{figure}
	If we arrange so that things are labeled as in the diagram, then a standard fact from the hyperbolic trigonometry of right-angled pentagons (Theorem 2.3.4 in \cite{Buser2}) yields that
		\begin{equation*}
			\cosh(\frac{1}{2} l(\gamma')) = \sinh(a)\sinh(b).
		\end{equation*}
	Or, using our Bers' constant, and noting $a \geq \frac{1}{4}l(\gamma_j)$ and $b \leq B/2$ we have that 
		\begin{align*}
			b &\leq \text{arcsinh} \left( \frac{\cosh(B/2)}{\sinh(\frac{1}{4}l(\gamma_j))} \right)\\
			&\leq \log \left(\frac{8e^{B/2}}{I}\right) = \frac{B}{2} - I + 3\log(2)
		\end{align*}
	by approximating $\text{arcsinh}(x) < \log(2x), \cosh(x) < e^x,$ and $x < \sinh(x)$. The perpendicular on the opposite side of $Y$ has the same length as $b$ because the pentagons on that side are isometric to those on the front side. Note that $b$ depends only on the lengths of the boundary components on the pair of pants in question. As the two boundary lengths opposite $\gamma_j$ were bounded by the Bers' constant, we see that same approximation on the other pair of pants meeting $\gamma_j$ yields the same bound. Now, the transversal to $\gamma_j$ lies in the free homotopy class of the curve formed by taking these four altitudes and two small arcs on $\gamma_j$ joining the appropriate ends of these copies of $b$. By the constraint placed on the twist parameters, these two smaller arcs have total length less than $l(\gamma_j)$. Thus, the length of the transversal $\delta_j$ is bounded above by
	\begin{equation*}
		l(\delta_j) \leq l(\gamma_j) + 4b \leq B + 4(\frac{B}{2} - I + 3\log(2)) = 3B - 4I + 12\log(2).
	\end{equation*}
\tab Now suppose that the surface $S$ has a cusps. We again look a pair of pants $Y$ with boundary geodesics $\gamma_k$ and $\gamma'$ so that $Y$ has a cusp. Then for both $\gamma_k$ and $\gamma'$, there exists a unique perpendicular geodesic which goes out the cusp, $\xi_k$ and $\xi'$ respectively. Adding the common perpendiculars $c$ from $\gamma_k$ to $\gamma'$ and $b$ from $\gamma_k$ to $\xi'$, we decompose the front half of $Y$ into a geodesic pentagon and a geodesic trirectangle with one ideal point. As $Y$ is made from the pasting of two isometric degenerate hexagons, we have an isometric construction on the opposite side of $Y$. Allow $a$ to be the segment on $\gamma_k$ of length $\geq \frac{1}{4}l(\gamma_k)$ on the front half of $Y$. If $a$ lies in the geodesic pentagon, then we can proceed exactly as above. If $a$ lies in the trirectangle, however, then we can use the standard trigonometric fact \cite[Thm 2.3.1 (i)]{Buser2} to conclude that 
	\begin{equation*}
		1 = \cos(0) = \text{sinh}(a)\text{sinh}(b) \Rightarrow b = \text{arcsinh}{\frac{1}{\text{sinh}(a)}}
	\end{equation*}
As $1 \leq \text{cosh}(\alpha)$ for all $\alpha$, we have that
	\begin{equation*}
		b \leq \text{arcsinh}{\frac{\text{cosh}(B/2)}{\text{sinh}(a)}}
	\end{equation*}
and we can bound the length of the transversal via the same procedure as above. For two cusps, we just create the analogous pair of geodesic trirectangles on the surface and arrive at the same bound.
	\begin{figure}
		\begin{center}
			\includegraphics[width=0.5\textwidth]{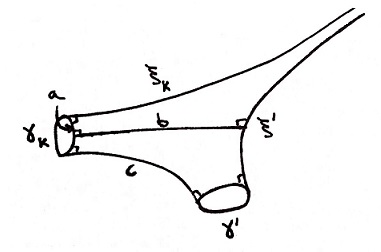} 
		\end{center}
		\caption{}
	\end{figure}
\end{proof}

As $S$ and $S'$ are length isospectral, the lengths of the transversals bounded in the previous proposition must appear in the spectrum of $S$. By the estimate in the previous section on length spectrum growth rate (Proposition 4.3), there are at most $(2g - 2 + n)\exp(3B - 4I + 12\log(2) + 6)$ possible lengths in this portion of the spectrum and hence this many choices for the length of each transversal on $S'$. As the length of the transversal at a geodesic in our pants decomposition determines the twist parameter up to 2 possible values, we obtain the following bound after accounting for the $3g-3+n$ geodesics in the pants decomposition:

\begin{lem}
	The number of possible twist parameters for a surface length isospectral to a hyperbolic surface $S$ with Bers' constant $B$ and lower bound on systole length $I$ based on graph $G$  and with fixed length parameters $L$ is bounded above by $2(\frac{1}{2}(2g - 2 + n))^{3g-3+n}\exp\{(3g-3+n)(3B - 4I + 12\log(2) + 6)\}$.
\end{lem}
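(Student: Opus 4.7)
The plan is to combine three results already established in the paper: the upper bound on transversal length from Proposition 5.2, the length spectrum growth rate from Proposition 4.3, and the 2-to-1 correspondence between transversal length and twist parameter from Proposition 3.3. These fit together in the natural way: Proposition 5.2 provides a universal ceiling $L_0 := 3B - 4\log(I) + 12\log(2)$ on the lengths of transversals in $S'$, isospectrality lets us count those lengths inside the spectrum of $S$, and Proposition 3.3 converts each length choice to at most two twist choices.

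In detail, first apply Proposition 5.2 to $S'$ (valid because $S'$ shares the Bers' constant $B$ and systole bound $I$) to conclude that every transversal $\delta_j$, for $j = 1,\dots, 3g-3+n$, satisfies $l(\delta_j) \leq L_0$. Second, since $S$ and $S'$ are length isospectral, each value $l(\delta_j)$ must appear in the length spectrum of $S$, so Proposition 4.3 gives at most
\begin{equation*}
\tfrac{1}{2}(2g-2+n)\exp\{L_0 + 6\}
\end{equation*}
candidate values for each $l(\delta_j)$. Third, Proposition 3.3 says that each such value determines $\alpha_j$ up to at most two possibilities, so the number of choices of $\alpha_j$ is bounded by
\begin{equation*}
2 \cdot \tfrac{1}{2}(2g-2+n)\exp\{L_0 + 6\} = (2g-2+n)\exp\{3B - 4\log(I) + 12\log(2) + 6\}.
\end{equation*}
Since the twist parameters at distinct pants curves are independent, raise this to the $(3g-3+n)$ power and identify the resulting expression with the stated bound (collecting the $\tfrac{1}{2}$ factors inside the base and the overall factor of $2$ outside).

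The main obstacle is the small technicality that Proposition 4.3's count excludes iterates of geodesics of length at most $2\,\text{arcsinh}(1)$, so strictly speaking we must verify that no transversal $\delta_j$ is such an iterate before reading off the count. This is immediate from the construction: each $\delta_j$ crosses $\gamma_j$ transversally, hence cannot be confined to a collar neighborhood of any sub-collar systolic geodesic, and in particular cannot be an iterate of one. Beyond this verification the argument is pure bookkeeping, so I expect no further substantial difficulty.
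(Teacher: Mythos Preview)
Your approach is exactly the paper's: the paragraph immediately preceding the Lemma combines the transversal-length bound (Proposition~5.3; your ``Proposition~5.2'' is a misnumbering), the growth rate of Proposition~4.3, and the two-to-one correspondence of Proposition~3.3 in precisely the order you describe. The paper does not even pause over the iterates-of-short-geodesics technicality you flag, so on that point you are being more careful than the original.

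There is, however, an arithmetic slip in your final ``identification''. Raising your per-curve bound $(2g-2+n)\exp\{L_0+6\}$ to the power $3g-3+n$ yields
\[
(2g-2+n)^{3g-3+n}e^{(3g-3+n)(L_0+6)} \;=\; 2^{\,3g-3+n}\Bigl(\tfrac{1}{2}(2g-2+n)\Bigr)^{3g-3+n}e^{(3g-3+n)(L_0+6)},
\]
which exceeds the stated bound by a factor of $2^{(3g-3+n)-1}$; you cannot ``collect the $\tfrac12$ factors inside the base and the overall factor of $2$ outside'' as claimed. This is not a defect in your strategy but in the Lemma as written: the paper's own argument produces the same larger quantity, so the leading coefficient $2$ in the statement appears to be a slip for $2^{3g-3+n}$, which is what the argument (yours and the paper's) actually supports.
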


Multiplying together the bounds for the graphs and length and twist parameters, we obtain the desired result.

\begin{rem}
The preceding proof is adapted from the first half of the proof of Theorem 1.1 in Buser \cite{Buser2} and arose from attempts to refine this proof. While there is room to tighten the constants used in Buser's approximations, any significant improvement to the asympototics of the bound seems unlikely with his technology unless the asymptotics on the Bers' constant for closed surfaces is improved. The proof has a factor of $e^{3g\log(g)}$ built in to account for the different graphs corresponding to the pants decompositions, rendering improvement beyond these asymptotics impossible. The factor of the genus squared in the exponent of Buser's bound comes from the same process described above: the Bers' constant depends on the genus, the length spectrum growth rate exponentiates this constant, and accounting for the number of geodesics in the pants decomposition introduces another factor of the genus. Thus, improvement in the Bers' constant results directly in the improvement of the bound.\\ \tab
	The bound produced here is heavily dependent upon our bound on systole length.  It is a consequence of the collar theorem that as a simple closed geodesic shrinks, it has an embedded cylinder of increasing length about it on the surface. This results in the lengthening of the transverse curve to the geodesic. Thus, allowing for arbitrarily short geodesics results in an unbounded collection of transverse curves and the proof presented here is no longer valid. Running Buser's proof for the general case with just the improved Bers' constant would only improve the coefficient of $g^2$ to about 679 due to the complications arising from the short geodesics.
\end{rem}
	
\section{Acknowledgements}

This paper was written during the Mathematics Research Experience for Undergraduates at the University of Michigan. I would like to thank my advisors Richard Canary and Benjamin Linowitz for their patience and insight, as well as the University of Michigan Mathematics Department for running such a rewarding REU program.

\end{document}